\definecolor{webgreen}{rgb}{0,.5,0}
\definecolor{webbrown}{rgb}{.6,0,0}
\definecolor{green}{rgb}{0.0, 0.5, 0.0}
\definecolor{comment}{rgb}{0.0, 0.5, 0.5}
\definecolor{string}{rgb}{0.73, 0.14, 0.14}
\lstdefinestyle{mystyle}{
    language=Python,
    keywordstyle=\color{green},   
    commentstyle=\color{comment},
    stringstyle = \color{string},
    numberstyle= \color{green},
    basicstyle=\ttfamily\footnotesize,
    breakatwhitespace=false,         
    breaklines=true,                 
    captionpos=b,                    
    keepspaces=true,                 
    showspaces=false,                
    showstringspaces=false,
    showtabs=false,                  
    tabsize=2
}
\theoremstyle{definition}
\newtheorem{definition}{Definition}[section]
\theoremstyle{plain}
\newtheorem{lemma}[definition]{Lemma}
\newtheorem{proposition}[definition]{Proposition}
\theoremstyle{remark}
\newcommand{\tile}[2]{\fill[fill=gray] (#1,#2) rectangle (#1 + 1,#2 + 1);}
\title{High Temperature domineering Positions}
\author{}
\date{}
\begin{document}

\begin{center}
  \vskip 1cm{\LARGE\bf
      High Temperature \textsc{Domineering} Positions
    }
    \vskip 1cm
  Svenja Huntemann\footnote{This author's research is supported in part by the Natural Sciences and Engineering Research Council of Canada grant 2022-04273.\\
  \textit{Keywords:} Combinatorial game, Domineering, Temperature\\
  \textit{MSC subject class:} 91A46}\\
  Department of Mathematics and Statistics\\
  Mount Saint Vincent University\\
  Halifax, NS~~B3M~2J6\\
  Canada\\
  \href{mailto:svenja.huntemann@msvu.ca}{\tt svenja.huntemann@msvu.ca} \\
  \ \\
  Tomasz Maciosowski \\
  Concordia University of Edmonton\\
  Canada\\
  \href{mailto:t.maciosowski@gmail.com}{\tt t.maciosowski@gmail.com} \\
\end{center}

\vskip .2 in

\begin{abstract}
  \textsc{Domineering} is a partizan game where two players have a collection of dominoes which they place on the grid in turn, covering up squares.
  One player places tiles vertically, while the other places them horizontally; the first player who cannot move loses.
  It has been conjectured that the highest temperature possible in \textsc{Domineering} is 2.
  We have developed a program that enables a parallel exhaustive search of \textsc{Domineering} positions with temperatures close to or equal to 2 to allow for analysis of such positions.
\end{abstract}

\section{Introduction}

\textsc{Domineering} is a partizan game where Left places blue, vertical dominoes, and Right places red, horizontal dominoes.
We will assume that if a player cannot make a move on their turn, then they lose.

Traditionally, \textsc{Domineering} is played on a rectangular board. As a game progresses, it naturally breaks into smaller components though that can have different shapes (see \cref{fig:DomExampleGame}). On their turn, the players now choose a component and make their move in it. The decomposition into such components is called the \textit{disjunctive sum}.

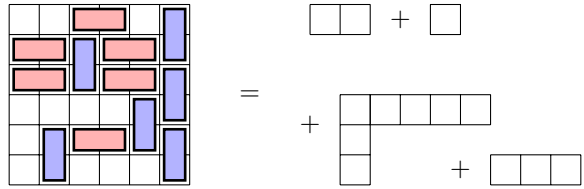
\begin{figure}[H]
  \centering
  \begin{tikzpicture}[scale=0.4]
    \draw (0,0) grid (6, 6);
    \filldraw[fill=red!30, line width=1pt] (0.15,4.15)--(1.85,4.15)--(1.85,4.85)--(0.15,4.85)-- cycle;
    \filldraw[fill=blue!30, line width=1pt] (1.15,0.15)--(1.85,0.15)--(1.85,1.85)--(1.15,1.85)-- cycle;
    \filldraw[fill=red!30, line width=1pt] (0.15,3.15)--(1.85,3.15)--(1.85,3.85)--(0.15,3.85)-- cycle;
    \filldraw[fill=blue!30, line width=1pt] (2.15,3.15)--(2.85,3.15)--(2.85,4.85)--(2.15,4.85)-- cycle;
    \filldraw[fill=red!30, line width=1pt] (3.15,3.15)--(4.85,3.15)--(4.85,3.85)--(3.15,3.85)-- cycle;
    \filldraw[fill=blue!30, line width=1pt] (4.15,1.15)--(4.85,1.15)--(4.85,2.85)--(4.15,2.85)-- cycle;
    \filldraw[fill=blue!30, line width=1pt] (5.15,0.15)--(5.85,0.15)--(5.85,1.85)--(5.15,1.85)-- cycle;
    \filldraw[fill=blue!30, line width=1pt] (5.15,2.15)--(5.85,2.15)--(5.85,3.85)--(5.15,3.85)-- cycle;
    \filldraw[fill=blue!30, line width=1pt] (5.15,4.15)--(5.85,4.15)--(5.85,5.85)--(5.15,5.85)-- cycle;
    \filldraw[fill=red!30, line width=1pt] (3.15,4.15)--(4.85,4.15)--(4.85,4.85)--(3.15,4.85)-- cycle;
    \filldraw[fill=red!30, line width=1pt] (2.15,1.15)--(3.85,1.15)--(3.85,1.85)--(2.15,1.85)-- cycle;
    \filldraw[fill=red!30, line width=1pt] (2.15,5.15)--(3.85,5.15)--(3.85,5.85)--(2.15,5.85)-- cycle;
    \node at (8,3) {$=$};
    \begin{scope}[shift={(10,5)}]
        \draw (0,0) grid (2, 1);
        \node at (3,0.5) {$+$};
        \draw (4,0) grid (5,1);
    \end{scope}
    \begin{scope}[shift={(11,0)}]
        \draw (0,0) grid (1,3);
        \draw (1,2) grid (5,3);
    \end{scope}
    \begin{scope}[shift={(16,0)}]
        \draw (0,0) grid (3,1);
    \end{scope}
    \node at (10,2) {$+$};
    \node at (15,0.5) {$+$};
  \end{tikzpicture}
  \caption[]{A possible position in \textsc{Domineering}}
  \label{fig:DomExampleGame}
\end{figure}

We consider each of the components as a game on their own, with possibly non-alternating play. To determine which player wins in a sum and how, we can assign to each game a \textit{game value} which indicates which player wins in it and how much of an advantage they have. 

Similarly, we can assign to each component a numerical value, called the \textit{temperature}, that indicates the urgency of moving in it, or, equivalently, how much the first player to go gains by moving in it. Note however that the best move is not always in the hottest component.

For more details about combinatorial game theory, the study of games similar to \textsc{Domineering}, including the formal definitions of value and temperature, see for example \cite{BerlekampCG2004,siegel2013combinatorial}.

An open question is what temperatures are possible in \textsc{Domineering} (see for example \cite{BerlekampCG2004,guy1995unsolved,Berlekamp2019}) and it has been conjectured that the maximum possible temperature is 2.


Drummond-Cole has found the first \textsc{Domineering} position with temperature $2$ \cite{drummond2004temperature}, which we show below. 

\begin{center}
    \begin{tikzpicture}[scale=0.4] \fill[fill=gray] (2,0) rectangle (3,1); \fill[fill=gray] (3,0) rectangle (4,1); \fill[fill=gray] (4,0) rectangle (5,1); \fill[fill=gray] (0,1) rectangle (1,2); \fill[fill=gray] (4,1) rectangle (5,2); \fill[fill=gray] (4,2) rectangle (5,3); \fill[fill=gray] (0,3) rectangle (1,4); \fill[fill=gray] (1,3) rectangle (2,4); \fill[fill=gray] (0,4) rectangle (1,5); \fill[fill=gray] (1,4) rectangle (2,5); \fill[fill=gray] (3,4) rectangle (4,5); \draw[step=1cm,black] (0,0) grid (5, 5); \end{tikzpicture}
\end{center}

Shankar and Sridharan in 2005 in ``New temperatures in Domineering'' list \textsc{domineering} positions with various temperatures found from a search of $5 \times 6$, $4 \times 8$, $3 \times 10$, and $2 \times 16$ grids \cite{shankar2005new}, including temperatures close to 2.

At a Virtual CGT Workshop in 2020, several more positions with temperature 2 were found by trial and error. All positions of temperature $2$ and close to $2$ found thus far have a common structure, which we call a hook, see below.

    \begin{center}
    \begin{tikzpicture}[scale=0.4] \fill[fill=gray] (0,0) rectangle (1,1); \fill[fill=gray] (1,0) rectangle (2,1); \fill[fill=gray] (1,2) rectangle (2,3); \fill[fill=gray] (0,4) rectangle (1,5); \fill[fill=gray] (1,4) rectangle (2,5); \draw[step=1cm,black] (0,0) grid (2, 5); \end{tikzpicture}
    \end{center}

We are interested in whether the hook is a necessary condition for a high-temperature position in \textsc{Domineering} since this might open new avenues towards a proof of Berlekamp's conjecture. Note that the hook is not sufficient. For example, the position shown below has temperature 0, thus quite low.

    \begin{center} 
    \begin{tikzpicture}[scale=0.4] \fill[fill=gray] (0,0) rectangle (1,1); \fill[fill=gray] (2,0) rectangle (3,1); \fill[fill=gray] (3,0) rectangle (4,1); \fill[fill=gray] (4,0) rectangle (5,1); \fill[fill=gray] (5,0) rectangle (6,1); \fill[fill=gray] (0,1) rectangle (1,2); \fill[fill=gray] (2,1) rectangle (3,2); \fill[fill=gray] (0,2) rectangle (1,3); \fill[fill=gray] (2,2) rectangle (3,3); \fill[fill=gray] (5,2) rectangle (6,3); \fill[fill=gray] (0,4) rectangle (1,5); \fill[fill=gray] (4,4) rectangle (5,5); \fill[fill=gray] (5,4) rectangle (6,5); \fill[fill=gray] (0,5) rectangle (1,6); \fill[fill=gray] (2,5) rectangle (3,6); \fill[fill=gray] (4,5) rectangle (5,6); \fill[fill=gray] (5,5) rectangle (6,6); \draw[step=1cm,black] (0,0) grid (6, 6); \end{tikzpicture}
    \end{center}

\section{Search}

As a first step towards identifying whether the hook is necessary for a high temperature position, we started an exhaustive search for positions with temperature above a given threshold.

We developed a program that allows for parallel, exhaustive search of \textsc{Domineering} positions to compute game values, thermographs (used to find temperature), and temperatures.
The implementation approach has been heavily inspired by Aaron Siegel's cgsuite \cite{CGSuite} but implemented from scratch in Rust for improved performance.

For easier data storage, we will think of each game as embedded into a rectangle, with some spaces already occupied. For example, the game below indicates a disjunctive sum of two games, from which we can then remove several completely occupied rows and columns to reach the smallest possible rectangles in which to embed the components.

\begin{center}
  \begin{tikzpicture}
    \begin{scope}[scale=0.5] \fill[fill=gray] (1,0) rectangle (2,1); \fill[fill=gray] (2,0) rectangle (3,1); \fill[fill=gray] (1,1) rectangle (2,2); \fill[fill=gray] (0,2) rectangle (1,3); \draw[step=1cm,black] (0,0) grid (3, 3); \end{scope}
    \node[text width=1cm, align=left] at (2.25,0.75) {=};
    \begin{scope}[scale=0.5, shift={(4.75,0)}] \fill[fill=gray] (1,0) rectangle (2,1); \fill[fill=gray] (2,0) rectangle (3,1); \fill[fill=gray] (1,1) rectangle (2,2); \fill[fill=gray] (2,1) rectangle (3,2); \fill[fill=gray] (0,2) rectangle (1,3); \fill[fill=gray] (1,2) rectangle (2,3); \fill[fill=gray] (2,2) rectangle (3,3); \draw[step=1cm,black] (0,0) grid (3, 3); \end{scope}
    \node[text width=1cm, align=left] at (4.75,0.75) {+};
    \begin{scope}[scale=0.5, shift={(9.75,0)}] \fill[fill=gray] (0,0) rectangle (1,1); \fill[fill=gray] (1,0) rectangle (2,1); \fill[fill=gray] (2,0) rectangle (3,1); \fill[fill=gray] (0,1) rectangle (1,2); \fill[fill=gray] (1,1) rectangle (2,2); \fill[fill=gray] (0,2) rectangle (1,3); \draw[step=1cm,black] (0,0) grid (3, 3); \end{scope}
    \node[text width=1cm, align=left] at (7,0.75) {=};
    \begin{scope}[scale=0.5, shift={(14,0)}] \draw[step=1cm,black] (0,0) grid (1, 2); \end{scope}
    \node[text width=1cm, align=left] at (8.25,0.75) {+};
    \begin{scope}[scale=0.5, shift={(16.5,0)}] \fill[fill=gray] (0,0) rectangle (1,1); \draw[step=1cm,black] (0,0) grid (2, 2); \end{scope}
  \end{tikzpicture}
\end{center}

It is known that the temperature of a disjoint sum of games is no larger than the temperature of any one of the components, i.e.
\[t(G + H) \leq \max \left\{ t(G), t(H) \right\}\] for any two games $G$ and $H$ (see for example \cite{siegel2013combinatorial}). Due to this, in the search for high temperature positions it is enough if we consider only positions that do not decompose into a disjunctive sum.

Also note that we did not only consider positions that can be reached during actual game play from a rectangle, but any possible subgrid of rectangular grids.

\section{Infinite Family of Positions With Temperature \texorpdfstring{$2$}{}}\label{sec:familytemp2}

During our search we have found an infinite family of positions which have temperature 2. 
A position in this family is built by appending $2 \times 2$ $L$-shapes to the bottom of the Drummond-Cole position. We will denote the position with $n$ such $L$-shapes added by $DCL_n$, see \cref{fig:DCL}.

\begin{figure}[H]
  \centering
  \begin{tikzpicture}[scale=0.4]
    \begin{scope}
      \tile{0}{8};
      \tile{1}{8};
      \tile{3}{8};
      \tile{0}{7};
      \tile{1}{7};
      \tile{4}{6};
      \tile{0}{5};
      \tile{4}{5};
      \tile{2}{4};
      \tile{0}{3};
      \tile{1}{3};
      \tile{2}{3};
      \tile{4}{3};
      \tile{0}{2};
      \tile{1}{2};
      \tile{2}{2};
      \tile{0}{1};
      \tile{1}{1};
      \tile{2}{1};
      \tile{4}{1};
      \tile{0}{0};
      \tile{1}{0};
      \tile{2}{0};
      \tile{0}{-1};
      \tile{1}{-1};
      \tile{2}{-1};
      \tile{4}{-1};
      \draw[step=1cm,black] (0,-1) grid (5, 9);
    \end{scope}

    \node[] at (2.5, -1.5) {$\vdots$};

    \begin{scope}[shift={(0, -4.5)}]
      \tile{0}{1};
      \tile{1}{1};
      \tile{2}{1};
      \tile{0}{0};
      \tile{1}{0};
      \tile{2}{0};
      \tile{4}{0};
      \draw[step=1cm,black] (0,0) grid (5, 2);
    \end{scope}

    \draw[decorate, decoration={brace,amplitude=5pt,mirror,raise=4ex}] (4.25,-4.5)--(4.25,5) node[midway,xshift=3em] {$n$};
  \end{tikzpicture}
  \caption[]{The position $DCL_n$, which has temperature $2$}
  \label{fig:DCL}
\end{figure}
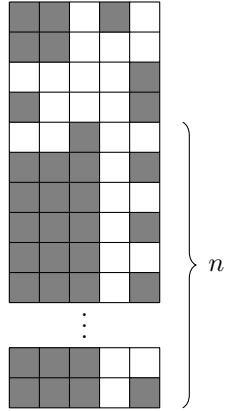

We claim that this position is a disjunctive sum of the Drummond-Cole position and the chain of $L$s. The former has value $\pm(2*)$, while the chain has value $0$ or $*$. Thus the overall position has canonical form $\pm (2*)$ or $\pm 2$ depending on $n$. To show this, we will prove the values of several related positions first. We will denote by $L_n$ a chain of $n$ of the $2\times 2$ $L$-shapes. Then $L_n^+$ has one additional square in the path, while $L_n^-$ has one removed. Finally, $L_n^\cup$ is $L_n^+$ with an additional square and a hook on one end added. For all four types of positions, see \cref{fig:Lns}.

\begin{figure}[!ht]
    \centering
    \begin{tikzpicture}[scale=0.4]
        \tile{1}{0}
        \draw[step=1cm,black] (0,0) grid (2,2);
        \node at (1, 3.25) {$\vdots$};
        \tile{1}{4}
        \tile{1}{6}
        \draw[step=1cm,black] (0,4) grid (2,8);
        \draw[decorate, decoration={brace,amplitude=5pt,mirror,raise=4ex}] (1.25,0)--(1.25,8) node[midway,xshift=3em] {$n$};

        \begin{scope}[shift={(8, 0)}]
        \tile{1}{0}
        \draw[step=1cm,black] (0,0) grid (2,2);
        \node at (1, 3) {$\vdots$};
        \tile{1}{4}
        \tile{1}{6}
        \tile{1}{8}
        \draw[step=1cm,black] (0,4) grid (2,9);
        \draw[decorate, decoration={brace,amplitude=5pt,mirror,raise=4ex}] (1.25,0)--(1.25,8) node[midway,xshift=3em] {$n$};
        \end{scope}

        \begin{scope}[shift={(16, 0)}]
        \draw[step=1cm,black] (0,1) grid (2,2);
        \node at (1, 3) {$\vdots$};
        \tile{1}{4}
        \tile{1}{6}
        \draw[step=1cm,black] (0,4) grid (2,8);
        \draw[decorate, decoration={brace,amplitude=5pt,mirror,raise=4ex}] (1.25,2)--(1.25,8) node[midway,xshift=3.75em] {$n-1$};
        \end{scope}

        \begin{scope}[shift={(25, 0)}]
        \tile{1}{0}
        \tile{-1}{0}
        \tile{-1}{1}
        \draw[step=1cm,black] (-1,0) grid (2,2);
        \node at (0.5, 3) {$\vdots$};
        \tile{1}{4}
        \tile{1}{6}
        \tile{1}{8}
        \tile{1}{9}
        \foreach \y in {4,5,...,9} {\tile{-1}{\y}}
        \tile{0}{11}
        \draw[step=1cm,black] (-1,4) grid (2,12);
        \draw[decorate, decoration={brace,amplitude=5pt,mirror,raise=4ex}] (1.25,0)--(1.25,8) node[midway,xshift=3em] {$n$};
        \end{scope}
    \end{tikzpicture}
    \caption{The positions $L_n$ (left), $L_n^+$ (centre left), $L_n^-$ (centre right), and $L_n^\cup$ (right)}
    \label{fig:Lns}
\end{figure}
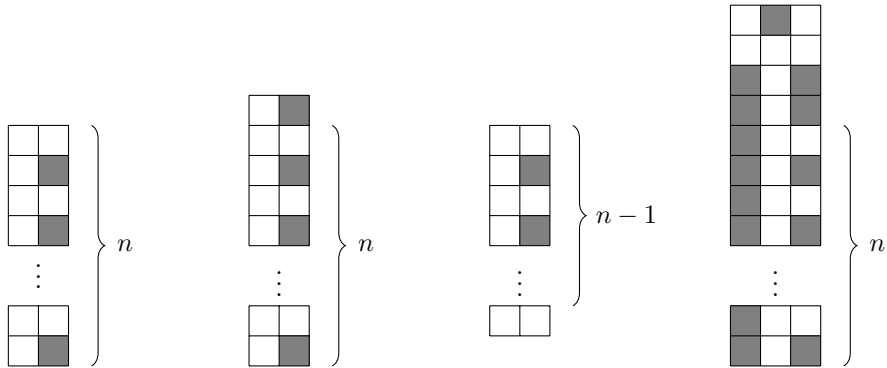

\begin{lemma}
The positions $L_n$, $L_n^+$, and $L_n^-$ have value $0$ if $n$ is even and $*$ if $n$ is odd.
\end{lemma}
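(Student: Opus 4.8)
The plan is to prove the three statements together by strong induction on $n$. The first step is a uniform description of the positions. Each of $L_n$, $L_n^+$, $L_n^-$ sits inside a $2\times h$ rectangle in which one column (say the left) is completely empty and the other column is empty in exactly every second row: $L_n$ has $h=2n$ with the right column present in the $n$ odd rows, $L_n^+$ has $h=2n+1$ with the right column present in the odd rows, and $L_n^-$ has $h=2n-1$ with the right column present in the $n$ even rows. In such a region Left's only moves are vertical dominoes in the left column, and Right's only moves are horizontal dominoes in a row where the right column is present. I would record two elementary facts to be used throughout: (i) a top-to-bottom reflection is a symmetry of \textsc{Domineering} and so does not change the value of such a region; and (ii) any move cuts the region into a piece strictly below the move, a piece strictly above it, and --- in the case of a vertical Left move --- exactly one stranded single square, of value $0$; these pieces are pairwise non-adjacent, so the resulting position is their disjunctive sum.

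The heart of the argument is a finite case analysis, split on the parity of the row $r$ at which the move is made, showing that each of the ``below'' and ``above'' pieces is again, up to reflection, one of $L_i$, $L_i^+$, $L_i^-$ with $i<n$, and that the two indices always add up to $n-1$. For example, a Left move in $L_n$ produces $L_i + 0 + L_{n-1-i}$ or $L_i^+ + 0 + L_{n-1-i}^-$, a Right move in $L_n$ produces $L_i^+ + L_{n-1-i}$, and the analogous (slightly different) lists arise for $L_n^+$ --- whose options, pleasantly, never involve the $L^-$ family --- and for $L_n^-$. Feeding in the inductive hypothesis together with the trivial evaluations $L_0=L_0^+=0$ (the empty region and a single square), every such disjunctive sum has value $0$ if $n$ is odd and $*$ if $n$ is even, because $*+*=0$ and the two indices sum to $n-1$. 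The lone blemish is $L_1^-$, which is a single horizontal domino and hence has value $-1$ rather than $0$ or $*$; it arises only among the Left options of $L_n$ and of $L_n^-$, always added to a piece of value $0$ or $*$, thereby contributing a Left option of value $-1$ (when $n$ is even) or $-1+*$ (when $n$ is odd). For this reason I would prove the $L^-$ statement for $n\ge2$ and record $L_1^-=-1$ as a base value.

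Given the above, the conclusion is short. For $L_n$, $L_n^+$, and $L_n^-$ (with $n\ge2$ in the last case), every Right option has value $0$ when $n$ is odd and $*$ when $n$ is even, every non-exceptional Left option has the same value, and the exceptional Left options ($-1$ when $n$ is even, $-1+*$ when $n$ is odd) are dominated and may be deleted, since $-1<*$ and $-1+*<0$. Hence the position is $\{0\mid0\}=*$ when $n$ is odd and $\{*\mid*\}=0$ when $n$ is even, invoking the standard facts that $\{0\mid0\}$ is $*$ and that $\{*\mid*\}$ is a second-player win and so equals $0$. The base cases are $n=0$ (where $L_0=L_0^+=0$) and $n=1$ (where $L_1$ is an L-tromino and $L_1^+$ is an L-tromino with one square added to its spine, each directly checked to have all options $0$ and hence to equal $\{0\mid0\}=*$, and $L_1^-=-1$); since the recursion for $L_n^-$ only ever calls on $L_i^-$ with $i\ge1$, these base cases suffice.

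I expect the real work, and the only place an error is likely to slip in, to be the bookkeeping of the middle paragraph: one must follow both the height and the exact set of occupied rows of each sub-piece through each cut, and a height that is off by one turns an $L_i$ into an $L_i^+$, or lands a piece in the $L^-$ family instead of the $L$ family, which corrupts the parity count. Laying this out as a table with one line per (family, parity of the cut, kind of move) --- and separately checking the handful of degenerate cuts at the very top or bottom, where a piece becomes empty or a single square --- is the safe way to present it; the remaining ingredients ($*+*=0$, $\{0\mid0\}=*$, $\{*\mid*\}=0$, and $-1<*$) are routine.
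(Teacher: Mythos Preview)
Your approach is essentially the paper's: strong induction on $n$, enumerate the options of each of the three families, observe that every option is (up to reflection) a disjunctive sum $X_i + Y_j$ with $i+j=n-1$ and $X, Y \in \{L, L^+, L^-\}$, and conclude via $*+*=0$, $\{0\mid0\}=*$, $\{*\mid*\}=0$. You are in fact more careful than the paper on one point: the paper silently treats $L_1^-$ as if it had value $*$, whereas you correctly note that $L_1^-$ is a horizontal $1\times 2$ strip of value $-1$ (so the lemma as stated fails at $n=1$ for the $L^-$ family), and your fix---restrict the $L^-$ claim to $n\ge2$, record $L_1^-=-1$, and discard the resulting exceptional Left options by the dominations $-1<*$ and $-1+*<0$---is exactly what is needed to close that gap.
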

We technically do not need all three of these families for the proof of the value of $DCL_n$, but since they are options of each other, it is easier to prove them all at once.
\begin{proof}
Let $c(n)=0$ when $n$ is even and $c(n)=*$ when $n$ is odd. We will show that $L_n=L_n^+=L_n^-=c(n)$ by strong induction.

For the base cases, $L_1$ and $L_1^+$ have value $*$ since for both players the only moves are to two single spaces, which is 0. Now assume that the result holds for $n\leq k-1$. Since a single square has value 0, we will ignore them in the options.

For $L_n$ any move by Left results in the sum $L_i+L_j$ or $L_i^-+L_j^+$ with $i+j=n-1$, or goes to $L_{n-1}$. By induction, all of these have value $c(n-1)$. Any Right move is to a sum $L_i+L_j^+$ with $i+j=n-1$, or to $L_{n-1}^+$. Again, all have value $c(n-1)$. Thus $L_n$ has value $c(n)$.

For $L_n^+$ any move by Left results in the sum $L_i^++L_j$ with $i+j=n-1$, or goes to $L_{n-1}$ or $L_{n-1}^+$. By induction, all of these have value $c(n-1)$. Any Right move is to a sum $L_i^++L_j^+$ with $i+j=n-1$, or to $L_{n-1}^+$. Again, all have value $c(n-1)$. Thus $L_n^+$ has value $c(n)$.

For $L_n^-$ any move by Left results in the sum $L_i^-+L_j$ with $i+j=n-1$, or goes to $L_{n-1}^-$. By induction, all of these have value $c(n-1)$. Any Right move is to a sum $L_i+L_j$ with $i+j=n-1$, or to $L_{n-1}$. Again, all have value $c(n-1)$. Thus $L_n^-$ has value $c(n)$.
\end{proof}

\begin{lemma}
    The position $L_n^\cup$ has value $2*$ if $n$ is even and $2$ if $n$ is odd.
\end{lemma}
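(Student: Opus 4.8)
The plan is to prove the statement by strong induction on $n$, in the same spirit as the proof of the preceding lemma. Write $d(n)=2$ when $n$ is odd and $d(n)=2*$ when $n$ is even (equivalently $d(n)=2+c(n+1)$ with $c$ as above). Morally, $L_n^\cup$ behaves like the disjunctive sum of a ``hook gadget'' of value $2$ with a chain $L_n$ (or $L_n^+$) of value $0$ or $*$: the hook supplies a guaranteed Left advantage of $2$, while the parity of the chain contributes the star. The position does not actually decompose, though, since in $L_n^\cup$ the hook is glued to the chain through a shared corridor (see \cref{fig:Lns}), so instead of a sum argument we track the options directly through the induction. First I would settle the base cases (the smallest values of $n$ for which all chain components appearing in the recursion are defined, e.g.\ $n=0$ and $n=1$) by a direct finite computation of the game tree, checking $L_0^\cup=2*$ and $L_1^\cup=2$.

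For the inductive step, assume the claim for all $m<n$ and enumerate the moves available to each player in $L_n^\cup$. These split into two kinds. A move played in the body of the chain, away from the hook, disconnects the position into $L_i^\cup$ together with a chain component of type $L_j$, $L_j^+$, or $L_j^-$ with $i+j=n-1$ (or it simply deletes one $L$, landing on $L_{n-1}^\cup$ possibly alongside a single square); by the previous lemma each such chain component is $0$ or $*$, and by the induction hypothesis $L_i^\cup=d(i)$, so every one of these options has value $d(i)$ or $d(i)+*$, hence is infinitesimally close to $2$. A move played in the hook region is the interesting case: here I expect Left to have a ``money move'', namely playing the vertical domino that occupies the hook corridor, after which the position reduces — possibly after one forced exchange — to a number-plus-chain of value exactly $2$ or $2*$; whereas Right's horizontal moves into the hook are poor and are answered by a Left reply that restores a value close to, and at least as large as, $2$.

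Assembling the options, one should obtain for even $n$ that, after discarding dominated options, $L_n^\cup=\{2\mid 2\}=2*$, and for odd $n$ that $L_n^\cup=\{1\mid\,\}=2$, the Right options all disappearing because each is reversible through a Left reply of value $\geq 2$ (concretely, after any Right move Left re-enters the hook and reaches a position $\geq 2$). The main obstacle, where essentially all the work lies, is the analysis of the hook region itself: one must verify that Left's hook move really does pin the value at the claimed form — i.e.\ establish the relevant upper bound, not just the easy lower bound — and that no horizontal Right move, whether inside the hook or at its junction with the chain, does better for Right than reversing out. This calls for a careful case split over the finitely many domino placements touching the hook. It is cleanest to isolate the ``hook plus one $L$'' fragment, compute its options once and for all, and then feed that into the induction; the remainder of the argument (the chain moves) is then routine, being governed entirely by the previous lemma and the induction hypothesis. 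An alternative to the canonical-form bookkeeping would be to prove $L_n^\cup\geq 2$ and, for even $n$, $L_n^\cup-2*=0$ by exhibiting explicit second-player strategies in $L_n^\cup$ played against small Domineering components realizing $-2$ and $*$; but the inductive option-by-option route parallels the previous lemma and is likely shorter.
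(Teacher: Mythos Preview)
Your induction plan is workable, but it takes a longer road than the paper does, and it overlooks the one observation that makes the lemma almost immediate. You write that ``the position does not actually decompose'' and therefore commit to tracking all options through an induction. The paper's point is precisely that it \emph{does} decompose, as a game value: the only move in $L_n^\cup$ that is not already a move in the disjunctive sum (hook-with-two-squares) $+$ $L_n$ is the single Left vertical placement across the one-cell-wide neck joining the two pieces (there is no Right move through that neck). The paper shows this Left move is strictly dominated by Left's move one step higher into the hook, by comparing the two resulting values ($\{2\mid 1\}+c(n-1)$ versus $2+c(n)$). Once that one domination is checked, $L_n^\cup$ literally has the same option sets as the sum, hence equals $2* + c(n)$, and the lemma follows from the previous one with no induction on $n$ at all.

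Your route---enumerate chain moves via the previous lemma and the inductive hypothesis, then handle the finitely many hook-region placements by hand and assemble the canonical form---can be carried out, but the ``main obstacle'' you flag (the hook-region case analysis, including Right's horizontal intrusions and the reversibility checks needed to reduce to $\{2\mid 2\}$ or $\{1\mid\,\}$) is exactly the work the decomposition argument avoids. In effect the paper isolates the one move that distinguishes $L_n^\cup$ from the sum and disposes of it with a single inequality; your plan instead re-derives the value of the hook gadget inside every inductive step. Both reach the same conclusion, but the domination-then-sum argument is shorter and scales cleanly to the analogous proof for $DCL_n$ that follows.
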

\begin{proof}
    We will show that $L_n^\cup$ is equal to the disjunctive sum of the hook with two squares added and $L_n$, i.e.\ that
    \begin{center}
        \begin{tikzpicture}[scale=0.4]
        \tile{1}{0}
        \tile{-1}{0}
        \tile{-1}{1}
        \draw[step=1cm,black] (-1,0) grid (2,2);
        \node at (0.5, 3) {$\vdots$};
        \tile{1}{4}
        \tile{1}{6}
        \tile{1}{8}
        \tile{1}{9}
        \foreach \y in {4,5,...,9} {\tile{-1}{\y}}
        \tile{0}{11}
        \draw[step=1cm,black] (-1,4) grid (2,12);

        \node at (0.5,8) {$a$};
        \node at (0.5,10) {$b$};
        
        \node at (3.5,6) {$=$};
                
        \begin{scope}[shift={(6, 4)}]
        \tile{1}{0}
        \tile{1}{1}
        \tile{-1}{0}
        \tile{-1}{1}
        \tile{0}{3}
        \draw[step=1cm,black] (-1,0) grid (2,4);
        \end{scope}

        \node at (9,6) {$+$};

        \begin{scope}[shift={(10, 3)}]
        \tile{1}{0}
        \draw[step=1cm,black] (0,0) grid (2,2);
        \node at (1, 3) {$\vdots$};
        \tile{1}{4}
        \tile{1}{6}
        \draw[step=1cm,black] (0,4) grid (2,8);
        \end{scope}
        \end{tikzpicture}
    \end{center}
    We will do so by showing that Left's move at $a$ in $L_n^\cup$ is strictly dominated by the move in $b$.

    When Left moves in $a$, the result is a sum of the hook with a single square attached, an isolated square, and $L_{n-1}^+$. This has value $\{2\mid 1\}+*$ when $n$ is even and $\{2\mid 1\}$ when $n$ is odd.

    On the other hand, when Left moves in $b$, the resulting sum is two vertical dominoes plus $L_n^+$, which has value $2$ when $n$ is even and $2*$ when $n$ is odd. This is strictly better for Left, thus she will never play in $a$ and the sum claimed above holds.

    Since the value of the hook with two squares added is $2*$, the value of $L_n^\cup$ is as claimed.
\end{proof}

We will now prove the value of $DCL_n$ in a similar manner.
\begin{proposition}
    The position $DCL_n$ has value $\pm(2*)$ if $n$ is even and $\pm2$ if $n$ is odd. Thus the temperature is 2.
\end{proposition}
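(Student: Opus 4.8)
The plan is to follow the method used for the lemma on $L_n^\cup$: first show that $DCL_n$ has the same game value as the disjunctive sum of the Drummond-Cole position $D$ and the chain $L_n$, and then read the value off from the two summands. Since $D$ has value $\pm(2*)=\{2*\mid -2*\}$ and, by the lemma on $L_n$, $L_n^+$, $L_n^-$, we have $L_n=0$ for $n$ even and $L_n=*$ for $n$ odd, this gives $DCL_n=\pm(2*)$ when $n$ is even and $DCL_n=\pm(2*)+*$ when $n$ is odd. The odd case is then finished by a one-line canonical-form computation: expanding, $\{2*\mid -2*\}+* = \{\,2,\ \pm(2*)\mid -2,\ \pm(2*)\,\}$, and since $-2<\pm(2*)<2$ the option $\pm(2*)$ is dominated on each side, leaving $\{2\mid -2\}=\pm 2$. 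Finally, $\pm(2*)$ has temperature $2$ (being the value of the Drummond-Cole position, which has temperature $2$) and $\pm 2$ has temperature $2$ (a number switch), so $t(DCL_n)=2$ in both cases.

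To establish $DCL_n=D+L_n$, I would argue exactly as for $L_n^\cup$. Classify the moves available in $DCL_n$ into those lying entirely within the $D$-part, those lying entirely within the attached $L$-chain, and the finitely many that straddle the place where the chain is joined to $D$. The first two kinds are precisely the options of $D+L_n$, so it suffices to show that each straddling move is strictly dominated by a non-straddling alternative. For a straddling Left move, one checks that the resulting position splits (up to isolated squares, which are $0$) into a fixed fragment of $D$ together with a chain of type $L$, $L^+$, or $L^-$; by the lemma on those families its value is then known, and it is strictly worse for Left than a move that plays a vertical domino inside $D$ while keeping the $D$-part at value $2$ or $2*$ — the analogue of replacing the move $a$ by the move $b$ in the $L_n^\cup$ argument. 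The Right case is symmetric. Deleting the dominated straddling options changes nothing, so $DCL_n=D+L_n$.

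The \emph{main obstacle} is the bookkeeping in that last step: one must list every domino of either colour that crosses the junction (including those lying in $D$ but using the cells freed when the chain was attached), verify in each case the claimed decomposition into a $D$-fragment, an $L/L^+/L^-$ chain, and isolated squares so that the value can be read off from the lemma on those families, and confirm the required inequality against the dominating move using the hot value $\pm(2*)$ of $D$. One should also double-check the indexing, i.e.\ that appending $n$ of the $2\times 2$ $L$-shapes really produces the chain $L_n$ and not $L_{n\pm 1}$, since it is the parity of $n$ that decides which of the two cases of the proposition holds. Once the identity $DCL_n=D+L_n$ is in hand, everything else is immediate.
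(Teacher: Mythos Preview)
Your plan is exactly the paper's: prove $DCL_n = D + L_n$ by showing that the unique move straddling the junction is dominated by a move inside the $D$-region, then read off the value from $D=\pm(2*)$ and $L_n\in\{0,*\}$. Your canonical-form reduction $\pm(2*)+*=\pm 2$ for odd $n$ is correct and in fact more explicit than what the paper writes.

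There is, however, a real gap in how you propose to evaluate the \emph{dominating} option. You say this move ``plays a vertical domino inside $D$ while keeping the $D$-part at value $2$ or $2*$'' and that the pieces can be handled with the lemma on $L_n,L_n^+,L_n^-$. That is not what happens. After Left plays the central move $b$ in $DCL_n$, the position does \emph{not} split into a Left option of $D$ together with an untouched $L_n$: the lower-right fragment of the old $D$ remains attached to the chain, and the actual decomposition is a hook (value~$0$) plus the position $L_n^\cup$. To know that this equals $2*$ (for $n$ even) or $2$ (for $n$ odd) you must invoke the separate lemma on $L_n^\cup$, which is precisely why the paper proves that lemma immediately before this proposition. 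Without it you cannot compute the value of the dominating move and hence cannot establish the required inequality $(\text{move }b)\ge(\text{move }a)$. Your ``main obstacle'' paragraph lists only $L$, $L^+$, $L^-$ as the ingredients; $L_n^\cup$ is the one you are missing.

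Two smaller remarks. First, there is no ``Right case'': the junction is a single vertical adjacency, so only Left has a straddling move, and the paper accordingly treats just the one Left move $a$. Second, your worry about ``cells freed when the chain was attached'' and about indexing $n$ versus $n\pm 1$ is reasonable bookkeeping, but the substantive missing piece is the $L_n^\cup$ evaluation, not the enumeration of junction moves.
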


\begin{proof}
    We will show that $DCL_n$ is equal to the disjunctive sum of the Drummond-Cole position and $L_n$, i.e.\ that
    \begin{center}
        \begin{tikzpicture}[scale=0.4]
        \begin{scope}
      \tile{0}{8};
      \tile{1}{8};
      \tile{3}{8};
      \tile{0}{7};
      \tile{1}{7};
      \tile{4}{6};
      \tile{0}{5};
      \tile{4}{5};
      \tile{2}{4};
      \tile{0}{3};
      \tile{1}{3};
      \tile{2}{3};
      \tile{4}{3};
      \tile{0}{2};
      \tile{1}{2};
      \tile{2}{2};
      \tile{0}{1};
      \tile{1}{1};
      \tile{2}{1};
      \tile{4}{1};
      \tile{0}{0};
      \tile{1}{0};
      \tile{2}{0};
      \tile{0}{-1};
      \tile{1}{-1};
      \tile{2}{-1};
      \tile{4}{-1};
      \draw[step=1cm,black] (0,-1) grid (5, 9);
    \end{scope}

    \node[] at (2.5, -1.5) {$\vdots$};

    \begin{scope}[shift={(0, -4.5)}]
      \tile{0}{1};
      \tile{1}{1};
      \tile{2}{1};
      \tile{0}{0};
      \tile{1}{0};
      \tile{2}{0};
      \tile{4}{0};
      \draw[step=1cm,black] (0,0) grid (5, 2);
    \end{scope}

        \node at (3.5,5) {$a$};
        \node at (2.5,6) {$b$};
        
        \node at (6.5,2) {$=$};
                
        \begin{scope}[shift={(9, 0)}]
        \tile{0}{1}
        \tile{0}{3}
        \tile{0}{4}
        \tile{1}{3}
        \tile{1}{4}
        \tile{2}{0}
        \tile{3}{0}
        \tile{3}{4}
        \tile{4}{0}
        \tile{4}{1}
        \tile{4}{2}
        \draw[step=1cm,black] (0,0) grid (5,5);
        \end{scope}

        \node at (15,2) {$+$};

        \begin{scope}[shift={(16, -2)}]
        \tile{1}{0}
        \draw[step=1cm,black] (0,0) grid (2,2);
        \node at (1, 3) {$\vdots$};
        \tile{1}{4}
        \tile{1}{6}
        \draw[step=1cm,black] (0,4) grid (2,8);
        \end{scope}
        \end{tikzpicture}
    \end{center}
    We will do so by showing that Left's move at $a$ in $DCL_n$ is strictly dominated by the move in $b$.

    When Left moves in $a$, the result is a sum of the Drummond-Cole position with the corner square removed (which has value $\pm\{2\mid 1\}$), an isolated square, and $L_{n-1}^+$. This has value $\pm\{2\mid 1\}+*$ when $n$ is even and $\pm\{2\mid 1\}$ when $n$ is odd.

    On the other hand, when Left moves in $b$, the resulting sum is a hook plus $L_n^\cup$, which has value $2*$ when $n$ is even and $2$ when $n$ is odd. This is strictly better for Left, thus she will never play in $a$ and the sum claimed above holds.

    Since the value of the Drummond-Cole position is $\pm(2*)$, the proposition holds.
\end{proof}


\section{Results of Search}

We have developed and run a program that allows for parallel exhaustive search of \textsc{Domineering} positions and computes canonical forms, thermographs, and temperatures.
We have run it for grids of size $5 \times 5$ and $5 \times 6$.
While it is possible to run it for larger grids, the number of positions grows exponentially with the size of the grid and the search space becomes too large to be feasible, mainly hitting the memory limits of the machine to store the transposition table of already computed positions. Without the use of a transposition table, the search would be too slow.
The estimated memory requirement for the exhaustive $6 \times 6$ grid search is about 4TB.

In the tables below, we are omitting positions that are a rotation or reflection of already included positions as they have the same temperature. For the $5\times 5$ and $5\times 6$ searches, we are including only positions with temperatures higher than $7/4$ in the tables.

Notice that all positions listed contain the hook. Further, all temperature $2$ positions contain the Drummond-Cole position -- although we do not prove so, it is likely that most, if not all, of these positions are a disjunctive sum of the Drummond-Cole position with the rest, the latter of which is an infinitesimal or a number, similar to the infinite family $DCL_n$ in \cref{sec:familytemp2}.

Attached is an output file with the computer readable positions and temperatures computed.

\subsection{\texorpdfstring{$5 \times 5$}{}}

{
\begin{longtabu}{m{2cm} m{1cm}|m{2cm} m{1cm}|m{2cm} m{1cm}} 
\hline Position & Temp. & Position & Temp. & Position & Temp. \\ \hline \endhead
\begin{tikzpicture}[scale=0.4] \fill[fill=gray] (2,0) rectangle (3,1); \fill[fill=gray] (3,0) rectangle (4,1); \fill[fill=gray] (4,0) rectangle (5,1); \fill[fill=gray] (0,1) rectangle (1,2); \fill[fill=gray] (4,1) rectangle (5,2); \fill[fill=gray] (4,2) rectangle (5,3); \fill[fill=gray] (0,3) rectangle (1,4); \fill[fill=gray] (1,3) rectangle (2,4); \fill[fill=gray] (0,4) rectangle (1,5); \fill[fill=gray] (1,4) rectangle (2,5); \fill[fill=gray] (3,4) rectangle (4,5); \draw[step=1cm,black] (0,0) grid (5, 5); \end{tikzpicture} & $2$ & \begin{tikzpicture}[scale=0.4] \fill[fill=gray] (2,0) rectangle (3,1); \fill[fill=gray] (0,1) rectangle (1,2); \fill[fill=gray] (4,1) rectangle (5,2); \fill[fill=gray] (4,2) rectangle (5,3); \fill[fill=gray] (0,3) rectangle (1,4); \fill[fill=gray] (1,3) rectangle (2,4); \fill[fill=gray] (0,4) rectangle (1,5); \fill[fill=gray] (1,4) rectangle (2,5); \fill[fill=gray] (3,4) rectangle (4,5); \draw[step=1cm,black] (0,0) grid (5, 5); \end{tikzpicture} & $15/8$ & \begin{tikzpicture}[scale=0.4] \fill[fill=gray] (2,0) rectangle (3,1); \fill[fill=gray] (3,0) rectangle (4,1); \fill[fill=gray] (0,1) rectangle (1,2); \fill[fill=gray] (4,2) rectangle (5,3); \fill[fill=gray] (0,3) rectangle (1,4); \fill[fill=gray] (1,3) rectangle (2,4); \fill[fill=gray] (0,4) rectangle (1,5); \fill[fill=gray] (1,4) rectangle (2,5); \fill[fill=gray] (3,4) rectangle (4,5); \draw[step=1cm,black] (0,0) grid (5, 5); \end{tikzpicture} & $15/8$ \\
\end{longtabu}
}

\subsection{\texorpdfstring{$5 \times 6$}{}}

{
\begin{longtabu}{m{2.4cm} m{1cm}|m{2.4cm} m{1cm}|m{2.4cm} m{1cm}} 
\hline Position & Temp. & Position & Temp. & Position & Temp. \\ \hline \endhead
\begin{tikzpicture}[scale=0.4] \fill[fill=gray] (2,0) rectangle (3,1); \fill[fill=gray] (3,0) rectangle (4,1); \fill[fill=gray] (5,0) rectangle (6,1); \fill[fill=gray] (0,1) rectangle (1,2); \fill[fill=gray] (4,2) rectangle (5,3); \fill[fill=gray] (5,2) rectangle (6,3); \fill[fill=gray] (0,3) rectangle (1,4); \fill[fill=gray] (1,3) rectangle (2,4); \fill[fill=gray] (5,3) rectangle (6,4); \fill[fill=gray] (0,4) rectangle (1,5); \fill[fill=gray] (1,4) rectangle (2,5); \fill[fill=gray] (3,4) rectangle (4,5); \fill[fill=gray] (5,4) rectangle (6,5); \draw[step=1cm,black] (0,0) grid (6, 5); \end{tikzpicture} & $2$ & \begin{tikzpicture}[scale=0.4] \fill[fill=gray] (2,0) rectangle (3,1); \fill[fill=gray] (3,0) rectangle (4,1); \fill[fill=gray] (5,0) rectangle (6,1); \fill[fill=gray] (0,1) rectangle (1,2); \fill[fill=gray] (4,2) rectangle (5,3); \fill[fill=gray] (0,3) rectangle (1,4); \fill[fill=gray] (1,3) rectangle (2,4); \fill[fill=gray] (5,3) rectangle (6,4); \fill[fill=gray] (0,4) rectangle (1,5); \fill[fill=gray] (1,4) rectangle (2,5); \fill[fill=gray] (3,4) rectangle (4,5); \fill[fill=gray] (5,4) rectangle (6,5); \draw[step=1cm,black] (0,0) grid (6, 5); \end{tikzpicture} & $2$ & \begin{tikzpicture}[scale=0.4] \fill[fill=gray] (2,0) rectangle (3,1); \fill[fill=gray] (3,0) rectangle (4,1); \fill[fill=gray] (0,1) rectangle (1,2); \fill[fill=gray] (4,2) rectangle (5,3); \fill[fill=gray] (0,3) rectangle (1,4); \fill[fill=gray] (1,3) rectangle (2,4); \fill[fill=gray] (5,3) rectangle (6,4); \fill[fill=gray] (0,4) rectangle (1,5); \fill[fill=gray] (1,4) rectangle (2,5); \fill[fill=gray] (3,4) rectangle (4,5); \fill[fill=gray] (5,4) rectangle (6,5); \draw[step=1cm,black] (0,0) grid (6, 5); \end{tikzpicture} & $2$ \\
\begin{tikzpicture}[scale=0.4] \fill[fill=gray] (2,0) rectangle (3,1); \fill[fill=gray] (3,0) rectangle (4,1); \fill[fill=gray] (0,1) rectangle (1,2); \fill[fill=gray] (4,2) rectangle (5,3); \fill[fill=gray] (5,2) rectangle (6,3); \fill[fill=gray] (0,3) rectangle (1,4); \fill[fill=gray] (1,3) rectangle (2,4); \fill[fill=gray] (5,3) rectangle (6,4); \fill[fill=gray] (0,4) rectangle (1,5); \fill[fill=gray] (1,4) rectangle (2,5); \fill[fill=gray] (3,4) rectangle (4,5); \fill[fill=gray] (5,4) rectangle (6,5); \draw[step=1cm,black] (0,0) grid (6, 5); \end{tikzpicture} & $31/16$ & \begin{tikzpicture}[scale=0.4] \fill[fill=gray] (0,0) rectangle (1,1); \fill[fill=gray] (1,0) rectangle (2,1); \fill[fill=gray] (2,0) rectangle (3,1); \fill[fill=gray] (0,1) rectangle (1,2); \fill[fill=gray] (4,1) rectangle (5,2); \fill[fill=gray] (0,2) rectangle (1,3); \fill[fill=gray] (4,2) rectangle (5,3); \fill[fill=gray] (5,2) rectangle (6,3); \fill[fill=gray] (3,3) rectangle (4,4); \fill[fill=gray] (4,3) rectangle (5,4); \fill[fill=gray] (5,3) rectangle (6,4); \fill[fill=gray] (1,4) rectangle (2,5); \fill[fill=gray] (3,4) rectangle (4,5); \fill[fill=gray] (4,4) rectangle (5,5); \fill[fill=gray] (5,4) rectangle (6,5); \draw[step=1cm,black] (0,0) grid (6, 5); \end{tikzpicture} & $15/8$ & \begin{tikzpicture}[scale=0.4] \fill[fill=gray] (0,0) rectangle (1,1); \fill[fill=gray] (1,0) rectangle (2,1); \fill[fill=gray] (2,0) rectangle (3,1); \fill[fill=gray] (4,0) rectangle (5,1); \fill[fill=gray] (1,1) rectangle (2,2); \fill[fill=gray] (2,1) rectangle (3,2); \fill[fill=gray] (5,2) rectangle (6,3); \fill[fill=gray] (0,3) rectangle (1,4); \fill[fill=gray] (1,3) rectangle (2,4); \fill[fill=gray] (5,3) rectangle (6,4); \fill[fill=gray] (0,4) rectangle (1,5); \fill[fill=gray] (1,4) rectangle (2,5); \fill[fill=gray] (3,4) rectangle (4,5); \fill[fill=gray] (4,4) rectangle (5,5); \fill[fill=gray] (5,4) rectangle (6,5); \draw[step=1cm,black] (0,0) grid (6, 5); \end{tikzpicture} & $15/8$ \\
\begin{tikzpicture}[scale=0.4] \fill[fill=gray] (0,0) rectangle (1,1); \fill[fill=gray] (1,0) rectangle (2,1); \fill[fill=gray] (2,0) rectangle (3,1); \fill[fill=gray] (4,0) rectangle (5,1); \fill[fill=gray] (5,0) rectangle (6,1); \fill[fill=gray] (0,1) rectangle (1,2); \fill[fill=gray] (4,1) rectangle (5,2); \fill[fill=gray] (0,2) rectangle (1,3); \fill[fill=gray] (3,3) rectangle (4,4); \fill[fill=gray] (4,3) rectangle (5,4); \fill[fill=gray] (5,3) rectangle (6,4); \fill[fill=gray] (1,4) rectangle (2,5); \fill[fill=gray] (3,4) rectangle (4,5); \fill[fill=gray] (4,4) rectangle (5,5); \fill[fill=gray] (5,4) rectangle (6,5); \draw[step=1cm,black] (0,0) grid (6, 5); \end{tikzpicture} & $15/8$ & \begin{tikzpicture}[scale=0.4] \fill[fill=gray] (0,0) rectangle (1,1); \fill[fill=gray] (1,0) rectangle (2,1); \fill[fill=gray] (2,0) rectangle (3,1); \fill[fill=gray] (4,0) rectangle (5,1); \fill[fill=gray] (1,1) rectangle (2,2); \fill[fill=gray] (2,1) rectangle (3,2); \fill[fill=gray] (5,2) rectangle (6,3); \fill[fill=gray] (1,3) rectangle (2,4); \fill[fill=gray] (5,3) rectangle (6,4); \fill[fill=gray] (0,4) rectangle (1,5); \fill[fill=gray] (1,4) rectangle (2,5); \fill[fill=gray] (3,4) rectangle (4,5); \fill[fill=gray] (4,4) rectangle (5,5); \fill[fill=gray] (5,4) rectangle (6,5); \draw[step=1cm,black] (0,0) grid (6, 5); \end{tikzpicture} & $15/8$ & \begin{tikzpicture}[scale=0.4] \fill[fill=gray] (1,0) rectangle (2,1); \fill[fill=gray] (2,0) rectangle (3,1); \fill[fill=gray] (4,0) rectangle (5,1); \fill[fill=gray] (1,1) rectangle (2,2); \fill[fill=gray] (2,1) rectangle (3,2); \fill[fill=gray] (5,2) rectangle (6,3); \fill[fill=gray] (1,3) rectangle (2,4); \fill[fill=gray] (5,3) rectangle (6,4); \fill[fill=gray] (0,4) rectangle (1,5); \fill[fill=gray] (1,4) rectangle (2,5); \fill[fill=gray] (3,4) rectangle (4,5); \fill[fill=gray] (4,4) rectangle (5,5); \fill[fill=gray] (5,4) rectangle (6,5); \draw[step=1cm,black] (0,0) grid (6, 5); \end{tikzpicture} & $15/8$ \\
\begin{tikzpicture}[scale=0.4] \fill[fill=gray] (2,0) rectangle (3,1); \fill[fill=gray] (4,0) rectangle (5,1); \fill[fill=gray] (1,1) rectangle (2,2); \fill[fill=gray] (2,1) rectangle (3,2); \fill[fill=gray] (5,2) rectangle (6,3); \fill[fill=gray] (1,3) rectangle (2,4); \fill[fill=gray] (5,3) rectangle (6,4); \fill[fill=gray] (0,4) rectangle (1,5); \fill[fill=gray] (1,4) rectangle (2,5); \fill[fill=gray] (3,4) rectangle (4,5); \fill[fill=gray] (4,4) rectangle (5,5); \fill[fill=gray] (5,4) rectangle (6,5); \draw[step=1cm,black] (0,0) grid (6, 5); \end{tikzpicture} & $15/8$ & \begin{tikzpicture}[scale=0.4] \fill[fill=gray] (2,0) rectangle (3,1); \fill[fill=gray] (3,0) rectangle (4,1); \fill[fill=gray] (4,0) rectangle (5,1); \fill[fill=gray] (5,0) rectangle (6,1); \fill[fill=gray] (0,1) rectangle (1,2); \fill[fill=gray] (4,2) rectangle (5,3); \fill[fill=gray] (5,2) rectangle (6,3); \fill[fill=gray] (0,3) rectangle (1,4); \fill[fill=gray] (1,3) rectangle (2,4); \fill[fill=gray] (5,3) rectangle (6,4); \fill[fill=gray] (0,4) rectangle (1,5); \fill[fill=gray] (1,4) rectangle (2,5); \fill[fill=gray] (3,4) rectangle (4,5); \fill[fill=gray] (5,4) rectangle (6,5); \draw[step=1cm,black] (0,0) grid (6, 5); \end{tikzpicture} & $15/8$ & \begin{tikzpicture}[scale=0.4] \fill[fill=gray] (1,0) rectangle (2,1); \fill[fill=gray] (3,0) rectangle (4,1); \fill[fill=gray] (4,0) rectangle (5,1); \fill[fill=gray] (5,0) rectangle (6,1); \fill[fill=gray] (3,1) rectangle (4,2); \fill[fill=gray] (4,1) rectangle (5,2); \fill[fill=gray] (0,2) rectangle (1,3); \fill[fill=gray] (0,3) rectangle (1,4); \fill[fill=gray] (4,3) rectangle (5,4); \fill[fill=gray] (0,4) rectangle (1,5); \fill[fill=gray] (1,4) rectangle (2,5); \fill[fill=gray] (2,4) rectangle (3,5); \fill[fill=gray] (4,4) rectangle (5,5); \draw[step=1cm,black] (0,0) grid (6, 5); \end{tikzpicture} & $15/8$ \\
\begin{tikzpicture}[scale=0.4] \fill[fill=gray] (1,0) rectangle (2,1); \fill[fill=gray] (3,0) rectangle (4,1); \fill[fill=gray] (5,0) rectangle (6,1); \fill[fill=gray] (3,1) rectangle (4,2); \fill[fill=gray] (0,2) rectangle (1,3); \fill[fill=gray] (5,2) rectangle (6,3); \fill[fill=gray] (0,3) rectangle (1,4); \fill[fill=gray] (4,3) rectangle (5,4); \fill[fill=gray] (5,3) rectangle (6,4); \fill[fill=gray] (0,4) rectangle (1,5); \fill[fill=gray] (1,4) rectangle (2,5); \fill[fill=gray] (2,4) rectangle (3,5); \fill[fill=gray] (5,4) rectangle (6,5); \draw[step=1cm,black] (0,0) grid (6, 5); \end{tikzpicture} & $15/8$ & \begin{tikzpicture}[scale=0.4] \fill[fill=gray] (1,0) rectangle (2,1); \fill[fill=gray] (3,0) rectangle (4,1); \fill[fill=gray] (4,0) rectangle (5,1); \fill[fill=gray] (3,1) rectangle (4,2); \fill[fill=gray] (0,2) rectangle (1,3); \fill[fill=gray] (5,2) rectangle (6,3); \fill[fill=gray] (0,3) rectangle (1,4); \fill[fill=gray] (4,3) rectangle (5,4); \fill[fill=gray] (5,3) rectangle (6,4); \fill[fill=gray] (0,4) rectangle (1,5); \fill[fill=gray] (1,4) rectangle (2,5); \fill[fill=gray] (2,4) rectangle (3,5); \fill[fill=gray] (5,4) rectangle (6,5); \draw[step=1cm,black] (0,0) grid (6, 5); \end{tikzpicture} & $15/8$ & \begin{tikzpicture}[scale=0.4] \fill[fill=gray] (1,0) rectangle (2,1); \fill[fill=gray] (3,0) rectangle (4,1); \fill[fill=gray] (3,1) rectangle (4,2); \fill[fill=gray] (0,2) rectangle (1,3); \fill[fill=gray] (5,2) rectangle (6,3); \fill[fill=gray] (0,3) rectangle (1,4); \fill[fill=gray] (4,3) rectangle (5,4); \fill[fill=gray] (5,3) rectangle (6,4); \fill[fill=gray] (0,4) rectangle (1,5); \fill[fill=gray] (1,4) rectangle (2,5); \fill[fill=gray] (2,4) rectangle (3,5); \fill[fill=gray] (5,4) rectangle (6,5); \draw[step=1cm,black] (0,0) grid (6, 5); \end{tikzpicture} & $15/8$ \\
\begin{tikzpicture}[scale=0.4] \fill[fill=gray] (1,0) rectangle (2,1); \fill[fill=gray] (3,0) rectangle (4,1); \fill[fill=gray] (3,1) rectangle (4,2); \fill[fill=gray] (4,1) rectangle (5,2); \fill[fill=gray] (0,2) rectangle (1,3); \fill[fill=gray] (0,3) rectangle (1,4); \fill[fill=gray] (4,3) rectangle (5,4); \fill[fill=gray] (5,3) rectangle (6,4); \fill[fill=gray] (0,4) rectangle (1,5); \fill[fill=gray] (1,4) rectangle (2,5); \fill[fill=gray] (2,4) rectangle (3,5); \fill[fill=gray] (5,4) rectangle (6,5); \draw[step=1cm,black] (0,0) grid (6, 5); \end{tikzpicture} & $15/8$ & \begin{tikzpicture}[scale=0.4] \fill[fill=gray] (3,0) rectangle (4,1); \fill[fill=gray] (4,0) rectangle (5,1); \fill[fill=gray] (5,0) rectangle (6,1); \fill[fill=gray] (1,1) rectangle (2,2); \fill[fill=gray] (5,1) rectangle (6,2); \fill[fill=gray] (1,2) rectangle (2,3); \fill[fill=gray] (5,2) rectangle (6,3); \fill[fill=gray] (1,3) rectangle (2,4); \fill[fill=gray] (2,3) rectangle (3,4); \fill[fill=gray] (0,4) rectangle (1,5); \fill[fill=gray] (1,4) rectangle (2,5); \fill[fill=gray] (2,4) rectangle (3,5); \fill[fill=gray] (4,4) rectangle (5,5); \draw[step=1cm,black] (0,0) grid (6, 5); \end{tikzpicture} & $15/8$ & \begin{tikzpicture}[scale=0.4] \fill[fill=gray] (3,0) rectangle (4,1); \fill[fill=gray] (4,0) rectangle (5,1); \fill[fill=gray] (5,0) rectangle (6,1); \fill[fill=gray] (1,1) rectangle (2,2); \fill[fill=gray] (5,1) rectangle (6,2); \fill[fill=gray] (5,2) rectangle (6,3); \fill[fill=gray] (2,3) rectangle (3,4); \fill[fill=gray] (0,4) rectangle (1,5); \fill[fill=gray] (1,4) rectangle (2,5); \fill[fill=gray] (2,4) rectangle (3,5); \fill[fill=gray] (4,4) rectangle (5,5); \draw[step=1cm,black] (0,0) grid (6, 5); \end{tikzpicture} & $15/8$ \\
\begin{tikzpicture}[scale=0.4] \fill[fill=gray] (1,0) rectangle (2,1); \fill[fill=gray] (2,0) rectangle (3,1); \fill[fill=gray] (4,0) rectangle (5,1); \fill[fill=gray] (2,1) rectangle (3,2); \fill[fill=gray] (5,2) rectangle (6,3); \fill[fill=gray] (1,3) rectangle (2,4); \fill[fill=gray] (5,3) rectangle (6,4); \fill[fill=gray] (0,4) rectangle (1,5); \fill[fill=gray] (1,4) rectangle (2,5); \fill[fill=gray] (3,4) rectangle (4,5); \fill[fill=gray] (4,4) rectangle (5,5); \fill[fill=gray] (5,4) rectangle (6,5); \draw[step=1cm,black] (0,0) grid (6, 5); \end{tikzpicture} & $29/16$ & \begin{tikzpicture}[scale=0.4] \fill[fill=gray] (0,0) rectangle (1,1); \fill[fill=gray] (2,0) rectangle (3,1); \fill[fill=gray] (4,0) rectangle (5,1); \fill[fill=gray] (2,1) rectangle (3,2); \fill[fill=gray] (5,2) rectangle (6,3); \fill[fill=gray] (1,3) rectangle (2,4); \fill[fill=gray] (5,3) rectangle (6,4); \fill[fill=gray] (0,4) rectangle (1,5); \fill[fill=gray] (1,4) rectangle (2,5); \fill[fill=gray] (3,4) rectangle (4,5); \fill[fill=gray] (4,4) rectangle (5,5); \fill[fill=gray] (5,4) rectangle (6,5); \draw[step=1cm,black] (0,0) grid (6, 5); \end{tikzpicture} & $29/16$ & \begin{tikzpicture}[scale=0.4] \fill[fill=gray] (2,0) rectangle (3,1); \fill[fill=gray] (4,0) rectangle (5,1); \fill[fill=gray] (2,1) rectangle (3,2); \fill[fill=gray] (5,2) rectangle (6,3); \fill[fill=gray] (1,3) rectangle (2,4); \fill[fill=gray] (5,3) rectangle (6,4); \fill[fill=gray] (0,4) rectangle (1,5); \fill[fill=gray] (1,4) rectangle (2,5); \fill[fill=gray] (3,4) rectangle (4,5); \fill[fill=gray] (4,4) rectangle (5,5); \fill[fill=gray] (5,4) rectangle (6,5); \draw[step=1cm,black] (0,0) grid (6, 5); \end{tikzpicture} & $29/16$ \\
\begin{tikzpicture}[scale=0.4] \fill[fill=gray] (0,0) rectangle (1,1); \fill[fill=gray] (1,0) rectangle (2,1); \fill[fill=gray] (2,0) rectangle (3,1); \fill[fill=gray] (0,1) rectangle (1,2); \fill[fill=gray] (4,1) rectangle (5,2); \fill[fill=gray] (0,2) rectangle (1,3); \fill[fill=gray] (5,2) rectangle (6,3); \fill[fill=gray] (3,3) rectangle (4,4); \fill[fill=gray] (1,4) rectangle (2,5); \fill[fill=gray] (3,4) rectangle (4,5); \fill[fill=gray] (5,4) rectangle (6,5); \draw[step=1cm,black] (0,0) grid (6, 5); \end{tikzpicture} & $29/16$ & \begin{tikzpicture}[scale=0.4] \fill[fill=gray] (0,0) rectangle (1,1); \fill[fill=gray] (1,0) rectangle (2,1); \fill[fill=gray] (2,0) rectangle (3,1); \fill[fill=gray] (0,1) rectangle (1,2); \fill[fill=gray] (4,1) rectangle (5,2); \fill[fill=gray] (0,2) rectangle (1,3); \fill[fill=gray] (3,3) rectangle (4,4); \fill[fill=gray] (1,4) rectangle (2,5); \fill[fill=gray] (3,4) rectangle (4,5); \fill[fill=gray] (5,4) rectangle (6,5); \draw[step=1cm,black] (0,0) grid (6, 5); \end{tikzpicture} & $29/16$ & \begin{tikzpicture}[scale=0.4] \fill[fill=gray] (1,0) rectangle (2,1); \fill[fill=gray] (2,0) rectangle (3,1); \fill[fill=gray] (4,0) rectangle (5,1); \fill[fill=gray] (2,1) rectangle (3,2); \fill[fill=gray] (0,2) rectangle (1,3); \fill[fill=gray] (5,2) rectangle (6,3); \fill[fill=gray] (1,3) rectangle (2,4); \fill[fill=gray] (5,3) rectangle (6,4); \fill[fill=gray] (3,4) rectangle (4,5); \fill[fill=gray] (4,4) rectangle (5,5); \fill[fill=gray] (5,4) rectangle (6,5); \draw[step=1cm,black] (0,0) grid (6, 5); \end{tikzpicture} & $29/16$ \\
\end{longtabu}
}

\subsection{\texorpdfstring{$6 \times 6$}{}}

Due to resource limitations, the $6 \times 6$ search results are not exhaustive. The search space was limited to positions with no more than 20 empty tiles. The table includes unique, up to rotation and reflection, positions with temperature $2$.

{
\begin{longtabu}{m{2.4cm} m{1cm}|m{2.4cm} m{1cm}|m{2.4cm} m{1cm}} 
\hline Position & Temp. & Position & Temp. & Position & Temp. \\ \hline \endhead
\begin{tikzpicture}[scale=0.4] \fill[fill=gray] (0,0) rectangle (1,1); \fill[fill=gray] (2,0) rectangle (3,1); \fill[fill=gray] (4,0) rectangle (5,1); \fill[fill=gray] (5,0) rectangle (6,1); \fill[fill=gray] (0,1) rectangle (1,2); \fill[fill=gray] (4,1) rectangle (5,2); \fill[fill=gray] (5,1) rectangle (6,2); \fill[fill=gray] (1,2) rectangle (2,3); \fill[fill=gray] (5,3) rectangle (6,4); \fill[fill=gray] (0,4) rectangle (1,5); \fill[fill=gray] (2,4) rectangle (3,5); \fill[fill=gray] (3,4) rectangle (4,5); \fill[fill=gray] (0,5) rectangle (1,6); \fill[fill=gray] (2,5) rectangle (3,6); \fill[fill=gray] (3,5) rectangle (4,6); \fill[fill=gray] (4,5) rectangle (5,6); \fill[fill=gray] (5,5) rectangle (6,6); \draw[step=1cm,black] (0,0) grid (6, 6); \end{tikzpicture} & $2$ & \begin{tikzpicture}[scale=0.4] \fill[fill=gray] (0,0) rectangle (1,1); \fill[fill=gray] (1,0) rectangle (2,1); \fill[fill=gray] (3,0) rectangle (4,1); \fill[fill=gray] (5,0) rectangle (6,1); \fill[fill=gray] (0,1) rectangle (1,2); \fill[fill=gray] (1,1) rectangle (2,2); \fill[fill=gray] (5,1) rectangle (6,2); \fill[fill=gray] (4,2) rectangle (5,3); \fill[fill=gray] (0,3) rectangle (1,4); \fill[fill=gray] (2,4) rectangle (3,5); \fill[fill=gray] (3,4) rectangle (4,5); \fill[fill=gray] (5,4) rectangle (6,5); \fill[fill=gray] (0,5) rectangle (1,6); \fill[fill=gray] (1,5) rectangle (2,6); \fill[fill=gray] (2,5) rectangle (3,6); \fill[fill=gray] (3,5) rectangle (4,6); \draw[step=1cm,black] (0,0) grid (6, 6); \end{tikzpicture} & $2$ & \begin{tikzpicture}[scale=0.4] \fill[fill=gray] (0,0) rectangle (1,1); \fill[fill=gray] (2,0) rectangle (3,1); \fill[fill=gray] (4,0) rectangle (5,1); \fill[fill=gray] (5,0) rectangle (6,1); \fill[fill=gray] (0,1) rectangle (1,2); \fill[fill=gray] (4,1) rectangle (5,2); \fill[fill=gray] (5,1) rectangle (6,2); \fill[fill=gray] (1,2) rectangle (2,3); \fill[fill=gray] (5,3) rectangle (6,4); \fill[fill=gray] (0,4) rectangle (1,5); \fill[fill=gray] (2,4) rectangle (3,5); \fill[fill=gray] (3,4) rectangle (4,5); \fill[fill=gray] (0,5) rectangle (1,6); \fill[fill=gray] (3,5) rectangle (4,6); \fill[fill=gray] (4,5) rectangle (5,6); \fill[fill=gray] (5,5) rectangle (6,6); \draw[step=1cm,black] (0,0) grid (6, 6); \end{tikzpicture} & $2$ \\
\begin{tikzpicture}[scale=0.4] \fill[fill=gray] (0,0) rectangle (1,1); \fill[fill=gray] (1,0) rectangle (2,1); \fill[fill=gray] (3,0) rectangle (4,1); \fill[fill=gray] (5,0) rectangle (6,1); \fill[fill=gray] (0,1) rectangle (1,2); \fill[fill=gray] (1,1) rectangle (2,2); \fill[fill=gray] (5,1) rectangle (6,2); \fill[fill=gray] (4,2) rectangle (5,3); \fill[fill=gray] (5,2) rectangle (6,3); \fill[fill=gray] (0,3) rectangle (1,4); \fill[fill=gray] (4,3) rectangle (5,4); \fill[fill=gray] (5,3) rectangle (6,4); \fill[fill=gray] (2,4) rectangle (3,5); \fill[fill=gray] (0,5) rectangle (1,6); \fill[fill=gray] (1,5) rectangle (2,6); \fill[fill=gray] (4,5) rectangle (5,6); \draw[step=1cm,black] (0,0) grid (6, 6); \end{tikzpicture} & $2$ & \begin{tikzpicture}[scale=0.4] \fill[fill=gray] (0,0) rectangle (1,1); \fill[fill=gray] (1,0) rectangle (2,1); \fill[fill=gray] (4,0) rectangle (5,1); \fill[fill=gray] (5,0) rectangle (6,1); \fill[fill=gray] (2,1) rectangle (3,2); \fill[fill=gray] (0,2) rectangle (1,3); \fill[fill=gray] (4,2) rectangle (5,3); \fill[fill=gray] (4,3) rectangle (5,4); \fill[fill=gray] (5,3) rectangle (6,4); \fill[fill=gray] (0,4) rectangle (1,5); \fill[fill=gray] (1,4) rectangle (2,5); \fill[fill=gray] (5,4) rectangle (6,5); \fill[fill=gray] (0,5) rectangle (1,6); \fill[fill=gray] (1,5) rectangle (2,6); \fill[fill=gray] (3,5) rectangle (4,6); \fill[fill=gray] (5,5) rectangle (6,6); \draw[step=1cm,black] (0,0) grid (6, 6); \end{tikzpicture} & $2$ \\
\end{longtabu}
}

\subsection{\texorpdfstring{$7 \times 7$}{}}
The $7 \times 7$ positions listed below were found using a simple genetic algorithm that started with a seed of manually discovered $7 \times 7$ grids with temperature $2$ and mutated it while saving positions that maintained a temperature of $2$.

\foreach \i in {1,...,51}{
    \hspace*{-1cm}
    \includegraphics[scale=1, page=\i, trim={1.75in 1.85in 1.25in 1.5in}, clip]{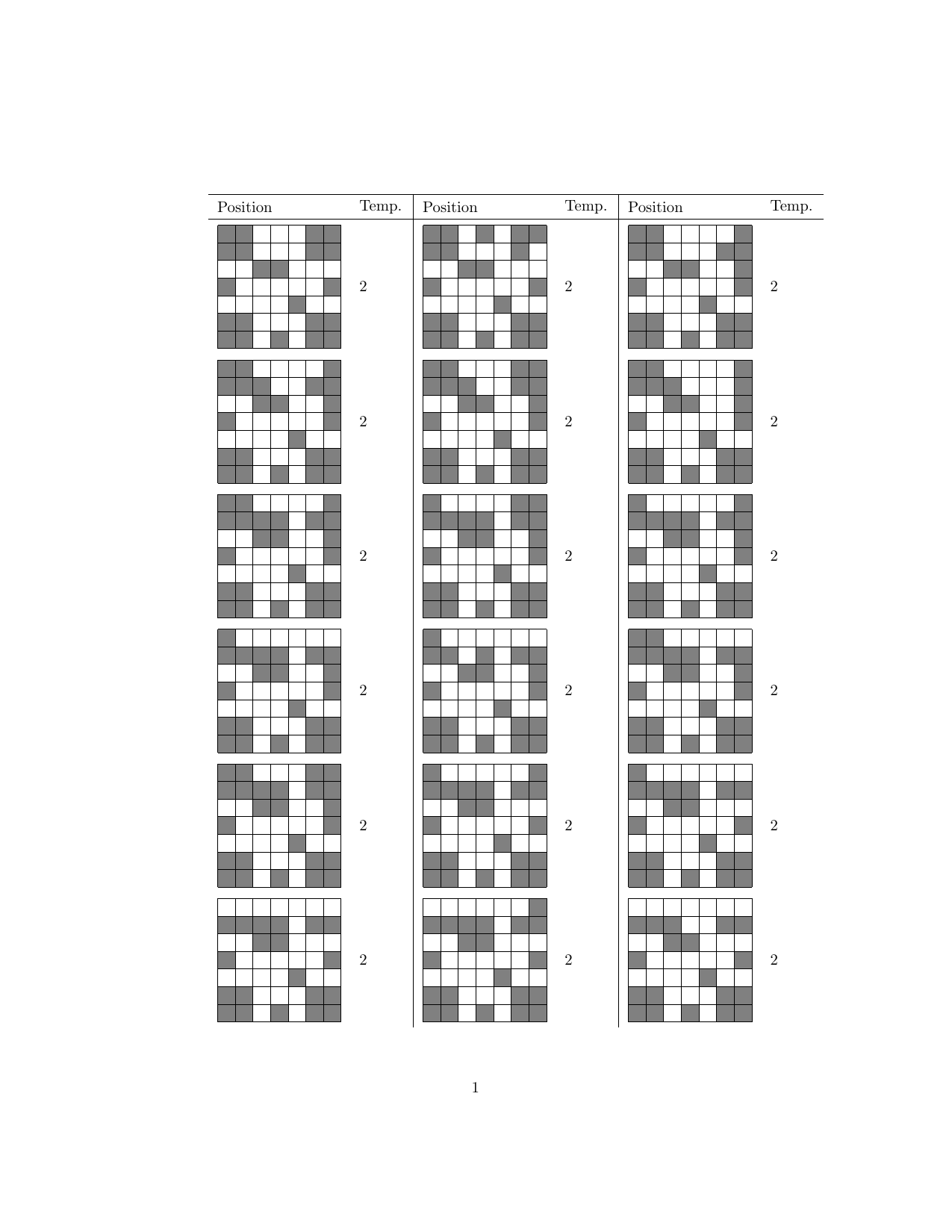}
    \newpage
}

\appendix
\section{Running Computations Using cgt-tools}

To run an exhaustive search or to generate \LaTeX{} tables from search results it is required to install \href{https://github.com/t4ccer/cgt-tools}{\texttt{cgt-tools}}, that was developed while writing this paper.
It is written in Rust, which can be installed following the \href{https://www.rust-lang.org/tools/install}{official guide}. The code of \texttt{cgt-tools} is published on GitHub so \href{https://git-scm.com/downloads}{\texttt{git}} is also required to obtain the source code.

Below the required commands to run the searches are listed.

\subsection{Compiling \texttt{cgt-tools}}

\begin{lstlisting}[frame=single]
$ git clone https://github.com/t4ccer/cgt-tools
$ cd cgt-tools
# Version 0.5.1 was used while writing this paper
# Some commands may change in the next versions
$ git checkout v0.5.1
$ cargo install --path cgt_cli
# Verify that installation was successful
$ cgt-cli --help
\end{lstlisting}

\subsection{Running exhaustive search}

\begin{lstlisting}[frame=single]
# Set <width> and <height> to desired grid size
# Set <search-output> to the desired file path with search results
$ cgt-cli domineering exhaustive-search \
        --width <width> --height <height> \
        --output-path <search-output>
\end{lstlisting}

Exhaustive search supports various configurable options like restricting search space to a given number of empty tiles, including or excluding decompositions, switching thermograph calculation methods and many more. They can all be viewed with \verb|cgt-cli domineering exhaustive-search --help|.

\subsection{Generating tables with search results}

\begin{lstlisting}[frame=single]
$ cgt-cli domineering latex-table \
        --in-file <search-output> --out-file my-table.tex
\end{lstlisting}

It is possible to tweak the number of columns and figure sizes. See all possible options with \verb|cgt-cli domineering latex-table --help|. Generated latex file can be later used with \verb|\input{<path>}| command in latex documents.

\begin{lstlisting}[frame=single]
\input{my-table}.tex
\end{lstlisting}

Note that the following latex packages must be imported to include the generated table.

\begin{lstlisting}[frame=single]
\usepackage{tabu}
\usepackage{tikz}
\usepackage{longtable} \tabulinesep=1.2mm
\end{lstlisting}


\end{document}